\documentclass{article}
\usepackage{amsfonts,amsmath,amsthm,amssymb}
\usepackage{graphics, epsfig}
\usepackage{color}
\usepackage{appendix}
\usepackage{ulem}
\usepackage[makeroom]{cancel}
\usepackage{fancyhdr}
\usepackage{centernot}
\usepackage{tikz-cd}
\usepackage{mathtools}
\usepackage{ stmaryrd }

 \usepackage[usenames,dvipsnames]{pstricks}
 \usepackage{pst-grad} 
 \usepackage{pst-plot} 
\allowdisplaybreaks

\let\TeXchi\chi
\newbox\chibox
\setbox0 \hbox{\mathsurround0pt $\TeXchi$}
\setbox\chibox \hbox{\raise\dp0 \box 0 }
\def\chi{\copy\chibox}


\newtheorem{proposition}{Proposition}[section]
\newtheorem{theorem}{Theorem}[section]
\newtheorem{definition}{Definition}[section]
\newtheorem{example}{Example}[section]
\newtheorem{lemma}{Lemma}[section]
\newtheorem{corollary}{Corollary}[section]
\newtheorem{remark}{Remark}[section]


\numberwithin{equation}{section}
\numberwithin{theorem}{section}
\numberwithin{definition}{section}
\numberwithin{example}{section}
\numberwithin{proposition}{section}
\numberwithin{lemma}{section}
\numberwithin{remark}{section}
\setcounter{secnumdepth}{3}

\DeclareMathOperator{\Ob}{Ob}
\DeclareMathOperator{\Arg}{Arg}
\DeclareMathOperator{\Spec}{Spec}
\DeclareMathOperator{\Spa}{Spa}
\DeclareMathOperator{\Prim}{Prim}
\DeclareMathOperator{\Pg}{Pg}
\DeclareMathOperator{\Str}{Str}
\newcommand\blfootnote[1]{%
  \begingroup
  \renewcommand\thefootnote{}\footnote{#1}%
  \addtocounter{footnote}{-1}%
  \endgroup
}
\pagestyle{fancy}
\fancyhf[HL]{Manuel Norman}
\fancyhf[HR]{Primigraph spaces}
\begin{document}
\title{Primigraph spaces}
\author
{Manuel Norman}
\date{}
\maketitle
\begin{abstract}
\noindent In this paper we introduce primigraph spaces, which are topological spaces together with a sheaf of $C^*$-algebras that can be covered by some $\Prim A$'s, that is, by the primitive spectra of some $C^*$-algebras endowed with Jacobson topology and together with the sheaf of bounded continuous
functions on them. This notion is analoguous, in some sense, to the ones of schemes and perfectoid spaces. Our first main result here is that every topological space gives rise to a primigraph space; this will imply, thanks to Theorem \ref{Thm:4.1}, that primigraph spaces also constitute a new kind of topological invariant.
\end{abstract}
\blfootnote{Author: \textbf{Manuel Norman}; email: manuel.norman02@gmail.com\\
\textbf{AMS Subject Classification (2020)}: 54B40, 46L05, 05C25\\
\textbf{Key Words}: primigraph space, graph, $C^*$-algebra, primitive spectrum, topological invariant}
\section{Introduction}
In [1] we introduced the notion of structured space, which has been developed in various other papers. In particular, in [2] we have studied some cohomologies for structured spaces arising from their corresponding poset (see Section 4 in [1]). In [3] we have shown that some ideas from [1-2] can be fruitfully applied to schemes and perfectoid spaces, obtaining some stratifications that allow us to define conical calculus on them. In the cited paper we also noticed that similar reasonings can be applied in other contexts: following this direction, here we use again the map $h$ defined in [1] in order to obtain some posets. We will then construct some graphs in a natural way, so that we can consider their graph $C^*$-algebras. Thanks to some results, we will show that these give rise to a primigraph space, which will be denoted by $\Pg(\cdot)$. More generally, a primigraph space is a topological space together with a sheaf of $C^*$-algebras on it which can be covered by primitive spectra of some $C^*$-algebras (endowed with Jacobson topology and together with the sheaf of bounded functions). Our second main result is that these primigraph spaces associated to topological spaces are actually a new kind of topological invariant, because:
$$ X \simeq Y \Rightarrow \Pg(X) \cong_{WL} \Pg(Y) $$
where the isomorphism of primigraph spaces is weak and local, in a sense that will be made precise later. This local isomorphism gives us a powerful tool, because $\Pg(X)$ is often a large space or it is difficult to compute explicitely, and thus it is often better to deal with their affine primigraph subspaces. We will indeed show that, if we consider any affine primigraph subspace of $\Pg(X)$, and there is no affine primigraph subspace of $\Pg(Y)$ which is isomorphic to it, then $X$ and $Y$ cannot be homeomorphic. This avoids the computation of $\Pg$ (at least for one of the spaces); here we will explicitely evaluate $\Pg$ in two basic examples: the first one involves the trivial topology, while the second one is a finite or countably infinite generalisation of the first one. We will also show some examples of applications of the invariance; in particular, a local version of $\Pg$, namely $\Pg_n$, can be really useful in establishing when two spaces are not homeomorphic, see the examples at the end of Section 4.\\
We refer to [16] and [41] for some useful results on computations of primitive ideal spaces of graph $C^*$-algebras, while we refer to [38-39] for some explicit examples of graph $C^*$-algebras. In the next Section we start to introduce all the needed concepts; moreover, we recall all the tools that will be used in the proofs of our main results. In Section 3 we prove our first main Theorem and we give the two basic examples outlined above. In Section 4 we give the proof of the second main Theorem; we then show some fruitful ways to apply this result, which avoid the computation of at least one of the $\Pg$'s. We conclude with some other invariants which arise from the proofs of the previous theorems.
\section{Preliminaries}
We start recalling the definition of the map $h$, as in [1-3]. This function describes "how dense" a point is in a topological space, w.r.t. some open covering. More precisely, let $X$ be a topological space, and let $(X_p)_p$ be some open covering of $X$, that is, a collection $(X_p)_p$ of open subsets of $X$ such that
$$\bigcup_p X_p = X$$
Following the definition in Section 4 of [1], but applied to topological spaces in a similar way to what we did in [2-3], we define $h:X \rightarrow \mathcal{L}$:
\begin{equation}\label{Eq:2.1}
h(x):=\lbrace X_t \in (X_p)_p : x \in X_t \rbrace
\end{equation}
Here $\mathcal{L}$ represents, as in [1-3], the power collection \footnote{The power collection is defined as for sets.} of $(X_p)_p$ without the empty collection. We define, as in [1-3], the following equivalence relation:
\begin{equation}\label{Eq:2.2}
x \sim y \Leftrightarrow h(x)=h(y)
\end{equation}
which gives rise to a partial order on $X/ \sim$. As usual, we fix some representatives so that $X/ \sim \, \subseteq X$. We now want to assign this poset a graph. Since many results for graph $C^*$-algebras hold for at most countably infinite graphs \footnote{For uncountable graphs, some of these results do hold (see for instance [15]). However, here we prefer to focus on countable graphs.}, we will not consider uncountably infinite posets. This means that we will only deal with 'admissible covers':
\begin{definition}\label{Def:2.1}
An open covering $(X_p)_p$ (which does not contain $\emptyset$) of a topological space $X$ is 'admissible' if the corresponding poset $X/ \sim$ is at most countably infinite.
\end{definition}
Notice that:
\begin{lemma}\label{Lm:2.1}
Every topological space $X \neq \emptyset$ has at least one admissible cover.
\end{lemma}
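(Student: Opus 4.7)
The plan is to exhibit an explicit admissible cover, and the natural candidate is the singleton cover $\{X\}$ itself, which is an open cover of $X$ not containing $\emptyset$ since we assumed $X \neq \emptyset$. With this choice, the map $h$ defined in \eqref{Eq:2.1} becomes constant: for every $x \in X$, the only element $X_t$ of the cover containing $x$ is $X$ itself, so $h(x) = \{X\}$. Hence the equivalence relation \eqref{Eq:2.2} identifies any two points of $X$, and the quotient $X/\sim$ reduces to a single equivalence class.

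Since a one-element poset is finite, it is a fortiori at most countably infinite, so the cover $\{X\}$ fits Definition \ref{Def:2.1}, completing the argument. I do not foresee any genuine obstacle here; the only small checks are that $\{X\}$ is indeed an open cover (trivial, as $X$ is open in itself), that it excludes $\emptyset$ (which is guaranteed by the hypothesis $X \neq \emptyset$), and that the induced poset is countable (immediate, as it has cardinality one). One could alternatively observe that any finite open cover $(X_p)_{p=1}^N$ not containing $\emptyset$ also gives an admissible cover, since $X/\sim$ then has at most $2^N - 1$ elements, but the minimal cover $\{X\}$ already suffices and is the cleanest witness to record in the proof.
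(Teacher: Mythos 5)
Your proposal is correct and matches the paper's own proof exactly: both take the singleton cover $\lbrace X \rbrace$, note it is open and covers $X$, and observe that the resulting poset $X/\sim$ is a single point, hence at most countable. The extra checks you spell out (constancy of $h$, exclusion of $\emptyset$) are fine but not needed beyond what the paper records.
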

\begin{proof}
Just take $(X_p)_p \equiv \lbrace X \rbrace$: $X$ is open by definition of topology, it clearly covers itself and the corresponding poset has only one point.
\end{proof}
This will imply (see the proof in Section 3) that \textit{every} topological space has a corresponding primigraph space. We remark that the unique exception in Lemma \ref{Lm:2.1} is the case $X=\emptyset$, for which we consider as cover, by definition, $(X_p)_p=\lbrace \emptyset \rbrace$. In all the other cases, $\emptyset$ is not considered because it could be always added to any cover not containing it, and so it would not be useful for our purposes (actually, it would enlarge $\Pg$, making more difficult its computation). We will return to $\emptyset$ later.\\
Now recall the definition of directed graph \footnote{If not specified otherwise, when we say 'graph' we always refer to a directed graph here.} (see, for instance, [11-12]):
\begin{definition}\label{Def:2.2}
A directed graph $G$ is an ordered pair $(V,E)$, where $V$ is the set of vertices, and $E$ is the set of edges, that is, of ordered pairs of vertices.
\end{definition}
Actually, some authors only allow distinct vertices in the definition of edge. We will see below that, even with this definition, we will not have any problem.\\
Now, it is known that we can always assign a directed graph to every poset $P$. This can be done, for instance, as follows (using Hasse diagrams):\\
1) the set $V$ of vertices is $P$ itself;\\
2) the set $E$ of edges consists of the ordered pairs $(x,y)$ such that $x < y$ (that is, $x \leq y$ and $x \neq y$) and there is no $z \in X/ \sim$ for which $x < z < y$ \footnote{Notice that the assumption of dealing only with at most countably infinite posets allows us to do this without difficulties (in particular, verifying that there is no $z$ such that $x < z < y$ is simple, thanks to this hypothesis).};\\
2') if there are no $x, y \in P$ such that $z < x$ or $y < z$, then we will consider no edge for $z$.\\
We give some examples of constructions of graphs from posets. If $P$ consists of the points $x,y,z,w,v,$, with $x < z < y$ and $v < w$ and no other order relations, we have the following directed graph:
\begin{center}
\begin{tikzcd}
\bullet \, x \arrow{r} & \bullet \, z \arrow{r} & \bullet \, y \\
\bullet \, v \arrow{r} & \bullet \, w
\end{tikzcd}
\end{center}
Another kind of directed graph is the following one:
$$\bullet \, x$$
$$\bullet \, y$$
$$\bullet \, z$$
without "connections" between the vertices. Actually, we can also consider at least two other possibilities (we will however follow the above one in this paper). The first one modifies 2'), considering the edge $(z,z)$ instead of not considering any edge (this possibility is not allowed by some authors, as said above). Another kind of graph can be obtained as follows: if we consider the graph above, we can define edges:
\begin{center}
\begin{tikzcd}
\bullet \, x \arrow{d} \\
\bullet \, y \arrow{d} \\
\bullet \, z 
\end{tikzcd}
\end{center}
Something similar can be done in the general case.\\
Clearly, the above arguments can be applied to the poset $X/ \sim$ \footnote{Notice that the graphs obtained are different, in general, from 'intersection graphs'.} (which will be always assumed to be at most contable here, as already said above). It is also easy to see that the following result holds:
\begin{proposition}\label{Prop:2.1}
The graphs corresponding to different choices of representatives of $X / \sim$ are all isomorphic.
\end{proposition}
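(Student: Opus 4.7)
The plan is to observe that the graph construction factors through the poset $X/\sim$, so the freedom in choosing representatives only amounts to relabeling vertices. The key point is that the partial order on $X/\sim$ (inherited from the relation defined via $h$) is, by construction, an order on equivalence classes; it does not depend on which point of a given class we pick as its representative.

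Concretely, I would fix two choices $R_1, R_2 \subseteq X$ of representatives for the classes of $\sim$, so that both $R_1$ and $R_2$ are in bijection with $X/\sim$ via the canonical quotient map. Let $r_i : X/\sim \, \to R_i$ denote the section picking the chosen representative in each class. Define
\[
\varphi : R_1 \longrightarrow R_2, \qquad \varphi(r_1([x])) := r_2([x]).
\]
This is a bijection of vertex sets. To verify it is a graph isomorphism in the sense of the Hasse-diagram construction (item 2 above), I would check: (i) $r_1([x]) < r_1([y])$ in $R_1$ if and only if $[x] < [y]$ in $X/\sim$ if and only if $r_2([x]) < r_2([y])$ in $R_2$, which is immediate since the order is defined on $X/\sim$ via $h$ and $h$ is constant on equivalence classes by \eqref{Eq:2.2}; and (ii) the ``no intermediate element'' condition is likewise a statement about the poset $X/\sim$, so it too transfers verbatim. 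Hence $(r_1([x]), r_1([y]))$ is an edge in the first graph if and only if $(r_2([x]), r_2([y])) = (\varphi(r_1([x])), \varphi(r_1([y])))$ is an edge in the second.

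Finally, for isolated vertices arising from clause 2'), the condition ``there are no $x,y \in P$ with $z < x$ or $y < z$'' is again a property of the class $[z]$ in $X/\sim$, so the status of being isolated is preserved under $\varphi$. This settles all cases and gives the graph isomorphism. The proof is essentially a bookkeeping exercise; the main (and only) thing to be careful about is to ensure that every ingredient of the construction in items 1), 2), 2') is phrased purely in terms of the poset $X/\sim$ and not in terms of the ambient points of $X$, which is exactly what the definition of $\sim$ via $h$ guarantees.
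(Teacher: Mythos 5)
Your proof is correct and follows the same route as the paper, which simply appeals to the definition of $X/\sim$ and declares the isomorphism clear; you have written out explicitly the bijection $\varphi$ between the two sets of representatives and verified that the order, the ``no intermediate element'' condition, and the isolated-vertex clause all descend to the poset $X/\sim$ and hence transfer. Nothing is missing; your version is just the fully detailed form of the paper's one-line argument.
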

\begin{proof}
Recall that two graphs are isomorphic if there is a bijection between the sets of vertices, say $f$, and if such bijection is such that whenever $(x,y)$ is an edge in the first graph, $(f(x),f(y))$ is an edge in the second graph, and viceversa. It is then clear, by definition of $X/ \sim$, that all the graphs obtained are isomorphic.
\end{proof}
Thus, it does not matter how we choose the representatives, because we will see later that isomorphic graphs induce isomorphic graph $C^*$-algebras. We now recall some basic aspects of this theory. First of all, we recall the definition of $C^*$-algebra:
\begin{definition}\label{Def:2.3}
A $C^*$-algebra $A$ is a Banach algebra over $\mathbb{C}$, together with an involution (that is, a map $x \mapsto x^*$, with $x \in A$, such that $(x^*)^*=x^*$) satisfing, $\forall x, y \in A$ and $\forall z \in \mathbb{C}$:\\
$$ (x+y)^* = x^* + y^*$$
$$(xy)^*=y^* x^*$$
$$(z x)^* = \overline{z} x^*$$
$$\Vert x^* x \Vert = \Vert x \Vert \Vert x^* \Vert$$
\end{definition}
A $^*$-homomorphism of $C^*$-algebras is a bounded linear map $T:A \rightarrow B$ such that, $\forall a, b \in A$:
$$T(ab)=T(a)T(b)$$
$$T(a^*)=T(a)^*$$
A bijective $^*$-homomorphism of $C^*$-algebras is called $C^*$-isomorphism.\\
Sheaves of $C^*$-algebras are defined as usual (see [10]). Here we will always refer to the first category defined in [4], namely $C^* \textbf{alg}$. See also [8] and [17].\\
There is a way to assign a $C^*$-algebra to any directed graph $G$ (it is easy to see that our definition of directed graph is equivalent to the one commonly used in this context). This has been done first for row-finite directed graphs, and then extended to coutably infinite graphs. We refer to [18] for an introduction on these topics and for some history; see also [19-20]. For graph $C^*$-algebras of row finite graphs, see [22]; for infinite graphs, see [21] and [23]. The existence of such graph $C^*$-algebras for any (even countably infinite) graph has been established in various papers (see, for instance, the ones above). Some results also hold for uncountably infinite graphs (see [15]), as said before, but here we only concentrate on countable ones. Briefly, we deal with Cuntz-Krieger $G$-families, where $G$ is a graph; it can be shown that there exists a (unique, up to $^*$-isomorphism) universal $C^*$-algebra generated by such a family, and this is called graph $C^*$-algebra associated to $G$. This $C^*$-algebra is usually denoted by $C^*(G)$. An important result is the following one:
\begin{proposition}\label{Prop:2.2}
Let $G_1$, $G_2$ be two graphs. Then:
$$ G_1 \cong G_2 \Rightarrow C^*(G_1) \cong C^*(G_2) $$
\end{proposition}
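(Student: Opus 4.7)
The plan is to exploit the universal property that characterises $C^*(G)$: namely, for any $C^*$-algebra $B$ carrying a Cuntz-Krieger $G$-family, there exists a unique $^*$-homomorphism from $C^*(G)$ to $B$ extending the family. Since a graph isomorphism is a pair of bijections between vertex sets and edge sets that preserves source and range, it should transport Cuntz-Krieger families of one graph into Cuntz-Krieger families of the other, and the universality will then produce mutually inverse $^*$-homomorphisms.

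First I would fix a graph isomorphism $\varphi=(\varphi_V,\varphi_E):G_1\to G_2$, and recall that $C^*(G_2)$ is generated by a canonical Cuntz-Krieger $G_2$-family $\{p_v,s_e:v\in V(G_2),\,e\in E(G_2)\}$. I would then define, for each $v\in V(G_1)$, $P_v:=p_{\varphi_V(v)}$, and for each $e\in E(G_1)$, $S_e:=s_{\varphi_E(e)}$. Because $\varphi$ preserves sources and ranges, a direct check shows that $\{P_v,S_e\}$ satisfies the Cuntz-Krieger relations for $G_1$ inside $C^*(G_2)$: the projections are mutually orthogonal, the partial isometries have orthogonal range projections, $S_e^*S_e=P_{r(e)}$, and $P_v=\sum_{s(e)=v}S_eS_e^*$ whenever $v$ is not a sink and emits finitely many edges (and similarly for the infinite emitter adjustments built into the definition of a Cuntz-Krieger family). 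By the universal property of $C^*(G_1)$, this produces a unique $^*$-homomorphism $\Phi:C^*(G_1)\to C^*(G_2)$ sending the generators as above.

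Running the symmetric construction with $\varphi^{-1}$ yields a $^*$-homomorphism $\Psi:C^*(G_2)\to C^*(G_1)$ in the opposite direction. To conclude, I would form the compositions $\Psi\circ\Phi$ and $\Phi\circ\Psi$; each sends the canonical generators of $C^*(G_i)$ to themselves, so by the uniqueness clause of the universal property each composition must coincide with the identity $^*$-homomorphism of $C^*(G_i)$. Hence $\Phi$ is a $C^*$-isomorphism.

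The step requiring most care is the verification that $\{P_v,S_e\}$ really is a Cuntz-Krieger $G_1$-family, in particular that the bookkeeping of sinks and of infinite emitters is respected under $\varphi$; however, this is automatic because the graph isomorphism induces a bijection on the sets $s^{-1}(v)$ for every vertex $v$, so the finite/infinite dichotomy and the sink property are preserved. Apart from that, everything is a formal consequence of universality, and no analytic input beyond the existence and uniqueness of $C^*(G)$ (cited from the references above) is needed.
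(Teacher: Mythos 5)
Your argument is correct, but it is worth noting that the paper does not actually prove this proposition: its ``proof'' consists of a pointer to the comments below Lemma 2.1.1 in [25], together with the remark that the statement persists for countably infinite graphs. What you have written is, in effect, the standard argument that such a reference would supply, carried out in full: transport the canonical Cuntz--Krieger $G_2$-family along the graph isomorphism to obtain a Cuntz--Krieger $G_1$-family inside $C^*(G_2)$, invoke universality to get $\Phi\colon C^*(G_1)\to C^*(G_2)$, do the same with $\varphi^{-1}$ to get $\Psi$, and use the uniqueness clause (both compositions agree with the identity on the generating family) to conclude they are mutually inverse. The one point that genuinely needs care --- that the relations involving sinks and infinite emitters are respected --- you handle correctly by observing that a graph isomorphism restricts to a bijection $s^{-1}(v)\to s^{-1}(\varphi_V(v))$ for each vertex, so the finite/infinite-emitter and sink dichotomies are preserved. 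Your route has the added benefit of making transparent why the countability hypothesis the paper worries about is irrelevant here: the argument uses only the existence and universal property of $C^*(G)$, not any cardinality restriction on the graph. The only caveat is conventional: the exact form of the relations ($S_e^*S_e = P_{r(e)}$ versus $P_{s(e)}$, and the direction in which sums over $s^{-1}(v)$ or $r^{-1}(v)$ are taken) varies between references, so the verification should be matched to whichever convention the cited sources use; this does not affect the validity of the argument.
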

\begin{proof}
See, for instance, the comments below Lemma 2.1.1 in [25]. The result also holds for countably infinite graphs.
\end{proof}
We now recall the notion of primitive spectrum (also called primitive ideal space), that is, the set consisting of primitive ideals of the $C^*$-algebra considered (a primitive ideal is the kernel of some irreducible representation). This space, denoted by $\Prim (\cdot)$, is endowed with the Jacobson topology (also called hull-kernel topology). See [16] for various results on the primitive spectrum of graph $C^*$-algebras. We refer to Section 3.1 of [26] for more details on this notion. The following result will be needed later:
\begin{proposition}\label{Prop:2.3}
If the $C^*$-algebras $A$ and $B$ are Morita equivalent, then $\Prim A \simeq \Prim B$ (their primitive spectra are homeomorphic).
\end{proposition}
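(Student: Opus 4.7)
The plan is to invoke the \textbf{Rieffel correspondence}. Morita equivalence of $A$ and $B$ provides an $A$--$B$-imprimitivity bimodule $X$: an $A$--$B$-bimodule equipped with full $A$-valued and $B$-valued inner products satisfying the usual compatibility. From such a bimodule one extracts an induction operation $X\text{-}\mathrm{Ind}$ sending a closed two-sided ideal $J \subseteq B$ to the closed ideal of $A$ generated by the inner products $\langle x \cdot j, y \rangle_A$ for $x, y \in X$ and $j \in J$. First, I would quote (from any standard reference on Morita equivalence of $C^*$-algebras, e.g. Raeburn--Williams) the fact that this $X\text{-}\mathrm{Ind}$ is an order-preserving bijection between the lattice of closed two-sided ideals of $B$ and the lattice of closed two-sided ideals of $A$, with inverse obtained by the symmetric construction using the dual bimodule $\widetilde{X}$.

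Next I would restrict this bijection to primitive ideals. The cleanest route is representation-theoretic: the imprimitivity bimodule $X$ induces, via the Rieffel induction functor $\mathrm{Ind}_B^A$, a categorical equivalence between the nondegenerate Hilbert space representations of $B$ and those of $A$, and this equivalence preserves irreducibility (since it is given by tensoring with a full bimodule and admits a quasi-inverse of the same form). Therefore it sends kernels of irreducible representations to kernels of irreducible representations, which, by definition, are exactly the primitive ideals. Alternatively, one can argue purely at the lattice level: $X\text{-}\mathrm{Ind}$ preserves arbitrary intersections and sums of ideals, hence maps prime ideals to prime ideals, and then uses the fact that primitive ideals coincide with prime ideals in a suitable class of $C^*$-algebras (or, more cleanly in general, invokes directly the representation-theoretic characterization above).

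Finally, the topological statement is almost automatic. Recall that the Jacobson topology on $\Prim A$ has closed sets of the form
$$ \mathrm{hull}(I) = \lbrace P \in \Prim A : P \supseteq I \rbrace. $$
Because the Rieffel bijection and its inverse both preserve the inclusion relation, the image of $\mathrm{hull}(I) \subseteq \Prim A$ under the induced map $\Prim A \to \Prim B$ is precisely $\mathrm{hull}(X\text{-}\mathrm{Ind}^{-1}(I)) \subseteq \Prim B$. Thus closed sets go to closed sets in both directions, giving the desired homeomorphism $\Prim A \simeq \Prim B$.

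The main obstacle, as I see it, is not any single calculation but the conceptual step of showing that the Rieffel correspondence sends primitive ideals to primitive ideals; the two available routes (direct lattice-theoretic via prime ideals, or categorical via preservation of irreducibility under the induction functor) both depend on nontrivial background results about imprimitivity bimodules, and a fully self-contained proof would need to develop this machinery. For the purposes of this paper, however, it suffices to cite the Rieffel correspondence and the preservation of irreducibility as standard facts from the theory of Morita equivalence of $C^*$-algebras.
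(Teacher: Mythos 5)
Your proposal is correct and follows the same route the paper relies on: the paper's ``proof'' is simply a citation of the Rieffel correspondence (it explicitly points to the ``Rieffel homeomorphism between $\Prim A$ and $\Prim B$'' in its references), which is exactly the machinery you sketch. You merely spell out the standard argument --- imprimitivity bimodule, ideal-lattice isomorphism, preservation of irreducibility, and the hull-kernel description of closed sets --- in more detail than the paper does, and your closing remark that citing these facts suffices matches the paper's own treatment.
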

\begin{proof}
This is a well known result. For instance, see the Introduction in [27] or [28] (before Theorem 2.3.3). Relatively to the latter one, we also refer to Corollary 3.3(a) in [29] for Rieffel homeomorphism between $\Prim A$ and $\Prim B$.
\end{proof}
Actually, we will use the following Corollary:
\begin{corollary}\label{Crl:2.1}
Let $A$, $B$ be $C^*$-algebras. Then, $A \cong B \Rightarrow \Prim A \simeq \Prim B$.
\end{corollary}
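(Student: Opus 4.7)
The plan is to deduce this directly from Proposition \ref{Prop:2.3}, by observing that $C^*$-isomorphism is a (very strong) special case of Morita equivalence. More precisely, I would first note that if $A \cong B$ as $C^*$-algebras via some $^*$-isomorphism $\varphi: A \to B$, then $A$ and $B$ are automatically Morita equivalent: for instance, the standard imprimitivity bimodule $A$ (viewed as an $A$--$A$-bimodule) can be twisted by $\varphi$ on one side to produce an $A$--$B$-imprimitivity bimodule, witnessing Morita equivalence. Once this is in place, Proposition \ref{Prop:2.3} yields $\Prim A \simeq \Prim B$ and the Corollary follows immediately.

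Alternatively, and perhaps more transparently, I would give a direct argument that avoids invoking Morita theory. Given a $C^*$-isomorphism $\varphi: A \to B$, define
\begin{equation*}
\varphi^{\sharp}: \Prim B \longrightarrow \Prim A, \qquad J \longmapsto \varphi^{-1}(J).
\end{equation*}
Since $\varphi$ is a bijective $^*$-homomorphism, it sends irreducible representations of $B$ to irreducible representations of $A$ by pullback, so $\varphi^{-1}(J)$ is indeed a primitive ideal of $A$ whenever $J$ is primitive in $B$; bijectivity of $\varphi^{\sharp}$ follows from bijectivity of $\varphi$, with inverse $I \mapsto \varphi(I)$.

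It then remains to verify that $\varphi^{\sharp}$ is a homeomorphism with respect to the Jacobson topology. Recall that closed sets in $\Prim A$ are of the form $\{P \in \Prim A : P \supseteq S\}$ for subsets $S \subseteq A$ (equivalently, for ideals $S$). The key observation is that $\varphi^{-1}(J) \supseteq S$ if and only if $J \supseteq \varphi(S)$, so $\varphi^{\sharp}$ carries closed sets to closed sets and the same holds for its inverse. This gives the required homeomorphism and completes the proof.

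I do not expect any genuine obstacle here; the statement is essentially a tautology once one knows that the primitive spectrum construction is functorial with respect to $C^*$-isomorphisms, and the only thing to be careful about is the bookkeeping with pullbacks of ideals and the definition of the Jacobson topology.
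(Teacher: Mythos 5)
Your first argument is exactly the paper's proof: the paper likewise observes that isomorphic $C^*$-algebras are Morita equivalent (citing [29]) and then invokes Proposition \ref{Prop:2.3}. Your alternative direct argument via the pullback $J \mapsto \varphi^{-1}(J)$ of primitive ideals is also correct and is in fact more elementary, since it bypasses Morita theory entirely and exhibits the homeomorphism explicitly from the definition of the Jacobson topology; but the route the paper actually takes is your first one.
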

\begin{proof}
It is known (see, for instance, below Example 2.3 in [29]) that if two $C^*$-algebras are isomorphic, then they are also Morita equivalent. Thus, applying Proposition \ref{Prop:2.3} we conclude the proof.
\end{proof}
We can now proceed with the definition of primigraph space. We first need:
\begin{definition}\label{Def:2.4}
A $C^*$-algebraic space is a topological space $X$ together with a sheaf of $C^*$-algebras on it, where we consider the category $C^*$\textbf{alg} (see [4]).
\end{definition}
The notions of morphism and isomorphism between two $C^*$-algebraic spaces are analoguous to the ones for ringed spaces (see, for instance, [5-6]). The primitive spectrum can be turned into a $C^*$-algebraic space by using the following sheaf $\mathfrak{C}$ (see [8]):
$$ \mathfrak{C}(U):=C_b(U) $$
for all the open subsets $U$, where $C_b(Y)$ denotes the $C^*$-algebra in Example 2.1.8 of [9] (note that it is defined for \textit{every} topological space $Y$; we will use this fact in the proof of Theorem \ref{Thm:3.1}). Finally, we have:
\begin{definition}\label{Def:2.5}
An affine primigraph space is a $C^*$-algebraic space which is isomorphic, as a a $C^*$-algebraic space, to $\Prim A$, for some $C^*$-algebra $A$.\\
A primigraph space is a $C^*$-algebraic space which can be covered by affine primigraph spaces.
\end{definition}
It is clear that this new kind of space is analoguous to schemes, which are locally ringed spaces that can be covered by spaces which are isomorphic to the spectrum $\Spec R$ of some ring, and perfectoid spaces, which are covered by $\Spa(R,R^+)$'s; see [5-6] and [30-34] for more on these topics. Morphisms of primigraph spaces are morphisms of $C^*$-algebraic spaces.\\
The name of these new spaces comes from the primitive spectrum $\Prim A$ of a $C^*$-algebra, which is involved in the definition itself, and from the proof of Theorem \ref{Thm:3.1}, where we use graphs.\\
In the next section we prove our first main result, namely that it is possible to associate to every topological space a primigraph space. Actually, we will see that this is not unique, but we will consider one which is given by the union of all the affine primigraph spaces naturally obtained in our construction.
\section{Primigraph space associated to a topological space}
We now associate to every topological space a certain primigraph space. Let $X$ be a topological space; choose any admissible cover for this space (we know that at least one admissible cover always exists by Lemma \ref{Lm:2.1}) \footnote{Actually, we have as unique exception the empty space. This will be treated in Section 4, where we show that a primigraph space can be also assigned to $\emptyset$.}. Then, by \eqref{Eq:2.1} and \eqref{Eq:2.2} we have a poset $X/ \sim$. We have shown in the previous section that we can assign it a directed graph (recall that $X/ \sim$ is at most countable by Definition \ref{Def:2.1}). We denote again by $X/ \sim$ this graph. Now, consider the graph $C^*$-algebra $C^*(X/ \sim)$, which exists and is unique up to $^*$-isomorphism. Notice that this $C^*$-algebra does not depend, up to isomorphism, on the chosen representatives of $X/ \sim$, because of Proposition \ref{Prop:2.1} together with Proposition \ref{Prop:2.2}. We can then consider the primitive spectrum of this $C^*$-algebra (we will show later, in the proofs of Theorem \ref{Thm:4.1} and Lemma \ref{Lm:4.1}, that isomorphic graphs give rise to isomorphic, as $C^*$-algebraic spaces, primitive spectra). Now consider the space:
\begin{equation}
\Pg(X):=\bigcup_p \Prim(A_p)
\end{equation}
where 
$$ A_p := X/ \sim_p $$
and the union is over all the admissible covers of $X$. We only need to give this space a topology and a sheaf of $C^*$-algebras, in such a way that it becomes a primigraph space. We define the topology as the smallest one generated by the collection (see, for instance, Example 1.1 in [1]):
$$ \lbrace U, \, \text{where} \,  U \,  \text{is an open subset of some} \, \Prim A_p \rbrace $$
We use again the sheaf $\mathfrak{C}$, but this time on $\Pg(X)$. Now it only remains to verify that this is compatible with the sheaves on each $\Prim A_p$. But, by definition of our topology, the open subsets of some $\Prim A_p$ are also open in $\Pg(X)$, and we clearly have (by the definition itself of $\mathfrak{C}$):
$$ \mathfrak{C}_{\Pg(X)}(U) = \mathfrak{C}_{\Prim A_p}(U)$$
where the subscript indicates which sheaf we are referring to, and where $U$ is an open subset of $\Prim A_p$ (and thus also of $\Pg(X)$). Therefore, we have completed the proof of our first main result:
\begin{theorem}\label{Thm:3.1}
To every topological space $X$ there corresponds a primigraph space $\Pg(X)$.
\end{theorem}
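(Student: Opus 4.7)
The plan is to carry out, uniformly in the chosen admissible cover, exactly the construction already outlined in the paragraph preceding the theorem, and then to verify that the resulting object falls under Definition \ref{Def:2.5}. Given a topological space $X$, the first step is to invoke Lemma \ref{Lm:2.1} to ensure that at least one admissible cover exists, so that the construction is not vacuous. For every admissible cover $(X_p)_p$ I would then apply the map $h$ of \eqref{Eq:2.1}, pass to the quotient by $\sim$ as in \eqref{Eq:2.2}, and so obtain an at-most-countable poset $X/\!\sim$; from this poset I would build the directed graph by the Hasse-type recipe described after Definition \ref{Def:2.2}, and finally form its graph $C^*$-algebra $A_p$. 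By Proposition \ref{Prop:2.1} and Proposition \ref{Prop:2.2}, $A_p$ is independent, up to $^*$-isomorphism, of the choice of representatives.

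Next, I would attach the topology and the sheaf. For each admissible cover I equip $\Prim A_p$ with the Jacobson topology and with the sheaf $\mathfrak{C}$ of bounded continuous functions, so that $\Prim A_p$ becomes an affine primigraph space in the sense of Definition \ref{Def:2.5}. On the set $\Pg(X) := \bigcup_p \Prim A_p$ I take the topology generated as a subbase by the open subsets of each $\Prim A_p$, which automatically makes every inclusion $\Prim A_p \hookrightarrow \Pg(X)$ an open topological embedding. I endow $\Pg(X)$ with the same sheaf $\mathfrak{C}$, which is well defined on any topological space by Example 2.1.8 of [9].

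The remaining check is local compatibility: the restriction $\mathfrak{C}_{\Pg(X)}|_{\Prim A_p}$ must coincide with $\mathfrak{C}_{\Prim A_p}$. But both sheaves are defined by $U \mapsto C_b(U)$, and by our choice of topology on $\Pg(X)$ an open subset of $\Prim A_p$ remains open in $\Pg(X)$, so the two assignments agree tautologically. Hence $\Pg(X)$ is a $C^*$-algebraic space covered by the affine primigraph spaces $\Prim A_p$, which is precisely what Definition \ref{Def:2.5} demands.

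The main obstacle I anticipate is not in the construction itself but in the sheaf-theoretic bookkeeping: one must be sure that $\mathfrak{C}$ really gives a sheaf on the subbase-generated topology on $\Pg(X)$, and that restrictions to overlaps $\Prim A_p \cap \Prim A_q$ coming from distinct admissible covers agree. Because $\mathfrak{C}$ is tautologically the sheaf of bounded continuous functions, both the sheaf axioms and the overlap compatibility reduce to standard facts about $C_b(\cdot)$, so the difficulty is essentially definitional once the topology on $\Pg(X)$ has been chosen to contain every open set of every $\Prim A_p$.
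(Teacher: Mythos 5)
Your proposal follows essentially the same route as the paper: existence of an admissible cover via Lemma \ref{Lm:2.1}, the poset-to-graph-to-$C^*$-algebra construction, the union of the $\Prim A_p$'s with the topology generated by their open subsets, and the tautological compatibility of the sheaf $\mathfrak{C}$ on overlaps. The only (harmless) divergence is that you flag the overlap/sheaf-axiom bookkeeping more explicitly than the paper does, and you do not single out the case $X=\emptyset$, which the paper also defers to Section 4.
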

We now give some simple examples of computation of $\Pg$. For the evaluation of some graph $C^*$-algebras, we refer to [38-39]. Since the explicit evaluation of $\Pg$ is not easy in general \footnote{This is why the invariants involving subspaces are often useful: we only have to find a subspace of $\Pg(X)$ which is not isomorphic to any subspace of $\Pg(Y)$ (or viceversa) to conclude that $X$ is not homeomorphic to $Y$. See the next Section for more details and for some examples.}, here we will only consider two basic cases: the first one is the trivial one, while the second one is a generalisation of the first one.
\begin{example}\label{Ex:3.1}
\normalfont Consider any topological space $X$ endowed with the trivial topology $\tau:=\lbrace X, \emptyset \rbrace$. Then, the unique corresponding (admissible) poset is a singleton, and hence we obtain the following graph:
$$ \bullet $$
It is well known that its graph $C^*$-algebra is $\mathbb{C}$, and thus:
$$ \Pg(X)= \Prim \mathbb{C} $$
(together with Jacobson topology and the usual sheaf of bounded functions).
\end{example}
\begin{example}\label{Ex:3.2}
\normalfont We generalise the previous example with the following topology. Consider any space $X$, and define:
$$ \tau:=\lbrace \emptyset, U_1, U_2, ..., U_n \rbrace $$
where
$$ U_1 \subsetneq U_2 \subsetneq ... \subsetneq U_n=X $$
for some sets $U_k$ ($n \in \mathbb{N} \setminus \lbrace 0 \rbrace$). If $n=1$, we obtain the trivial topology, as in the previous example. It is clear that, up to isomorphism, we have precisely these graphs (obtained from all the possible admissible covers \footnote{For example, the first graph is obtained from the cover $\lbrace X \rbrace$, while the second one is obtained from any cover of the kind $\lbrace X, U_k \rbrace$, with $k \neq n$ (note that different $k$'s give rise to isomorphic graphs).}):
\begin{center}
\begin{tikzcd}
\bullet \\
\bullet \arrow{r} & \bullet \\
... \\
\bullet \arrow{r} & \bullet \arrow{r} & ... \arrow{r} & \bullet
\end{tikzcd}
\end{center}
where in the last line we have $n$ points. The graph $C^*$-algebra corresponding to the $k$-th graph above is $M_k(\mathbb{C})$, that is, the $C^*$-algebra of $k \times k$ matrices with values in $\mathbb{C}$. Then, we have:
$$ \Pg(X)= \bigcup_{k=1}^{n} \Prim M_k(\mathbb{C}) $$
When we have a countably infinite number of $U_k$'s, we can easily extend this result to:
$$ \Pg(X)= \bigcup_{k \in \mathbb{N} \setminus \lbrace 0 \rbrace} \Prim M_k(\mathbb{C}) \, \cup \, \Prim B_0(H)$$
where $B_0(H)$ is the $C^*$-algebra of compact operators on a separable infinite dimensional Hilbert space $H$ (recall that every separable infinite dimensional Hilbert space is isometrically isomorphic to $l^2$). It is known that $B_0(H)$ can be seen as a limit of $M_n(\mathbb{C})$, where we consider $M_n(\mathbb{C}) \subset M_{n+1}(\mathbb{C})$ in the usual way. Thus, we can say that:
$$ \Pg(X)=\Prim B_0(H) $$
\end{example}
\section{Topological invariants}
In this section we prove our second main result, namely:
\begin{theorem}\label{Thm:4.1}
If $X \simeq Y$ (i.e. $X$ and $Y$ are homeomorphic), then $\Pg(X) \cong_{WL} Pg(Y)$.
\end{theorem}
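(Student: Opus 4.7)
The plan is to transport all of the data used to build $\Pg(X)$ along a homeomorphism $f : X \to Y$ and verify that it matches the data used to build $\Pg(Y)$, at the level of each affine piece. Since $\Pg$ is constructed as a union indexed over \emph{all} admissible covers, and the bijection $f$ induces a bijection on admissible covers (sending $(X_p)_p$ to $(f(X_p))_p$ and vice versa, because $f$ maps open sets to open sets in both directions and preserves set-theoretic unions), it is enough to show that each affine primigraph subspace appearing in $\Pg(X)$ has a canonically isomorphic counterpart in $\Pg(Y)$. This is what the weak, local notion of isomorphism $\cong_{WL}$ amounts to: we do not claim a single global morphism of $C^*$-algebraic spaces between $\Pg(X)$ and $\Pg(Y)$, but only a coherent matching of their affine covers.

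First I would fix an admissible cover $(X_p)_p$ of $X$ and let $(Y_p)_p := (f(X_p))_p$, which is an admissible cover of $Y$. The map $f$ then descends to a bijection $X/\sim \, \to \, Y/\sim$: for $x \in X$ one has $h_X(x) = \{X_t : x \in X_t\}$ corresponding under $f$ to $\{Y_t : f(x) \in Y_t\} = h_Y(f(x))$, so $x \sim x'$ iff $f(x) \sim f(x')$. Moreover the induced bijection is an order isomorphism, since the order on $X/\sim$ (resp.\ $Y/\sim$) is given by inclusion of the sets $h(\cdot)$, and $f$ preserves these set-theoretic inclusions. Consequently the Hasse-diagram directed graphs built from the two posets are isomorphic as directed graphs, using the order-isomorphism on vertices and the obvious compatibility of covering relations.

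Next I would invoke Proposition \ref{Prop:2.2} to conclude $C^*(X/\sim) \cong C^*(Y/\sim)$ as $C^*$-algebras, and then Corollary \ref{Crl:2.1} to obtain a homeomorphism $\Prim C^*(X/\sim) \simeq \Prim C^*(Y/\sim)$. For the sheaves, the key point is that on each affine piece the structure sheaf is $\mathfrak{C}(U) = C_b(U)$, and a homeomorphism of underlying spaces pulls back bounded continuous functions to bounded continuous functions; this pullback is a $C^*$-isomorphism on every open set and commutes with restrictions, so it is an isomorphism of sheaves of $C^*$-algebras. Hence each affine $\Prim A_p$ in $\Pg(X)$ is isomorphic, as a $C^*$-algebraic space, to the corresponding affine piece in $\Pg(Y)$. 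The same argument runs with the roles of $X$ and $Y$ swapped (using $f^{-1}$), which establishes the weak-local isomorphism on both sides.

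The main obstacle I anticipate is twofold. First, the precise articulation of $\cong_{WL}$: one must explain that the affine-by-affine isomorphisms produced above constitute the intended notion and do not need to glue to a single morphism of $\Pg$'s (indeed they typically cannot, because distinct admissible covers for $X$ can produce overlapping but non-identified primitive spectra inside $\Pg(X)$). Second, verifying that the homeomorphism $\Prim C^*(X/\sim) \simeq \Prim C^*(Y/\sim)$ obtained from Corollary \ref{Crl:2.1} is compatible with $\mathfrak{C}$; this reduces to checking that the induced homeomorphism, being a homeomorphism of topological spaces, automatically intertwines the sheaves $C_b(-)$ via pullback, a point best separated off as an auxiliary lemma (which, as the excerpt announces, is Lemma \ref{Lm:4.1}).
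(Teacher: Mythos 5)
Your proposal is correct and follows essentially the same route as the paper: transport admissible covers along the homeomorphism, deduce an isomorphism of posets and hence of graphs, apply Proposition \ref{Prop:2.2} and Corollary \ref{Crl:2.1} to get homeomorphic primitive spectra, and upgrade to an isomorphism of $C^*$-algebraic spaces by pulling back bounded continuous functions (which is exactly the content of the paper's Lemma \ref{Lm:4.1}). Your closing remarks about isolating the sheaf-compatibility step as an auxiliary lemma and about $\cong_{WL}$ being an affine-by-affine matching rather than a global morphism accurately anticipate how the paper organizes the argument.
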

The 'weak local' isomorphism will be useful in the applications of this result, as we will show later. Note that the above Theorem, together with Theorem \ref{Thm:3.1}, gives us a new topological invariant. This is however difficult to compute in general, as noticed in the previous section. The local application of the result can thus be really helpful in various situations.
\begin{proof}
Suppose that $X$ is homeomorphic to $Y$ ($X \simeq Y$). Then, chosen any admissible covering of $X$, we can always find an admissible open covering of $Y$ as follows (the same can be done starting, conversely, from an admissible cover of $Y$ and using $\pi^{-1}$). If $\pi : X \rightarrow Y$ is a homeomorphism, then
$$ (\pi(X_p))_p $$
is an admissible cover of $Y$. Indeed, since $\pi$ is continuous we know that $\pi(X_p)$ is open for all $p$. Moreover, since $\pi$ is also bijective it is clear that:
$$ \pi(\bigcup_p X_p) = \bigcup_p \pi(X_p) $$
and thus:
$$ Y=\pi(X)=\pi(\bigcup_p X_p)= \bigcup_p \pi(X_p)$$
from which we conclude that $(\pi(X_p))_p$ is an open cover. It is admissible because:
$$h_X(x)=\lbrace \text{some} \, X_t \text{'s} \rbrace \Leftrightarrow h_Y(\pi(x))=\lbrace \text{the corresponding} \, \pi(X_t) \text{'s} \rbrace$$
and because, by assumption, the chosen cover of $X$ is admissible. Now, for each open covering $(X_p)_p$ of $X$, consider some homeomorphism $\pi$, and notice that the graphs obtained from $X/ \sim$ (where we used $(X_p)_p$) and $Y/ \sim$ \footnote{We are abusing notation and writing $\sim$ for both. We can do this thanks to homeomorphism.} (where we used $(\pi(X_p))_p$) are isomorphic. This easily follows from the above equation connecting $h_X$ and $h_Y$ an from the fact that:
$$ \pi(X_t \cap X_p)= \pi(X_t) \cap \pi(X_p)$$
(this holds because $\pi$ is bijective). Now suppose that $g$ is another homeomorphism between $X$ and $Y$. We can do the same, and the graph $Y/ \sim$ obtained this time turns out to be again isomorphic to the previous one (thus justifing the same notation for both). Indeed, again because of the previous relation, the posets are isomorphic (as posets), because there is clearly a bijection between the elements (thanks to the fact that homeomorphisms are bijections) and the partial orders are maintained. Thus, as we did in the previous proof, we can conclude that we always obtain isomorphic graphs. Consequently, chosen any admissible cover of $X$, we have 
$$X/ \sim \, \, \cong Y/ \sim$$
(as graphs). Hence, we obtain two isomorphic graph $C^*$-algebras, which give rise to two homeomorphic primitive spectra 
$$\Prim \, _X A_j \simeq \Prim \, _Y B_j$$
(we have reordered the indeces). This follows from Proposition \ref{Prop:2.2} and Corollary \ref{Crl:2.1}. If 
$$\Prim \, _X A_j \cong \Prim \, _Y B_j$$
also as $C^*$-algebras spaces, then it is clear that $\Pg(X)$ and $\Pg(Y)$ arise from unions of isomorphic $C^*$-algebraic spaces. We hence only have to prove that:
$$\Prim \, _X A_j \simeq \Prim \, _Y B_j \Rightarrow \Prim \, _X A_j \cong \Prim \, _Y B_j$$
and this is done below.
\end{proof}
Before concluding the proof, we reformulate this last part as follows:
\begin{lemma}\label{Lm:4.1}
If $X \simeq Y$, then $\Prim C^*(X/ \sim) \cong \Prim C^*(Y/ \sim)$, where the (admissible) covering of $X$ is given by some $(X_p)_p$, and the covering of $Y$ is given by $(\pi(X_p))_p$, for \textbf{any} homeomorphism $\pi: X \rightarrow Y$.
\end{lemma}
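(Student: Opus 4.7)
The plan is to upgrade the homeomorphism of primitive spectra obtained in the proof of Theorem \ref{Thm:4.1} via Proposition \ref{Prop:2.2} and Corollary \ref{Crl:2.1} to an isomorphism of $C^*$-algebraic spaces, by pulling back bounded continuous functions. Call $\phi: \Prim C^*(X/\sim) \to \Prim C^*(Y/\sim)$ this homeomorphism. Since both structure sheaves are given by the same recipe $\mathfrak{C}(U) = C_b(U)$, the task is to produce a sheaf morphism $\phi^\#: \mathfrak{C}_Y \to \phi_* \mathfrak{C}_X$ that is a pointwise $C^*$-isomorphism; together with $\phi$ this yields an isomorphism of $C^*$-algebraic spaces in the sense recalled after Definition \ref{Def:2.4}.

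For each open $U \subseteq \Prim C^*(Y/\sim)$, the set $\phi^{-1}(U)$ is open by continuity of $\phi$, and I would define
$$\phi^\#_U(f) := f \circ \phi, \qquad f \in C_b(U).$$
The checks I would then carry out are routine: $\phi^\#_U$ is $\mathbb{C}$-linear, multiplicative, and intertwines the pointwise involutions, because the $C^*$-operations on $C_b$ are defined pointwise; it is isometric for the supremum norm since $\phi$ is bijective on points; and it is bijective with inverse $g \mapsto g \circ \phi^{-1}$, thanks to the continuity of $\phi^{-1}$. Naturality in $U$ follows from the identity $(f \circ \phi)|_{\phi^{-1}(V)} = (f|_V) \circ \phi$ for $V \subseteq U$, so the family $\{\phi^\#_U\}_U$ assembles into an isomorphism of sheaves of $C^*$-algebras.

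I do not expect a genuine obstacle here: the lemma essentially amounts to the observation that $C_b(-)$, pulled back along a homeomorphism, is a topological invariant of the underlying space, and the verification reduces to pointwise computations. The only subtlety is bookkeeping, namely fixing the specific homeomorphism $\phi$ supplied by Proposition \ref{Prop:2.2} and Corollary \ref{Crl:2.1} and keeping the direction of $\phi^\#$ consistent with the ringed-space convention used in Definition \ref{Def:2.4}.
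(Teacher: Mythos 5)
Your proposal is correct and follows essentially the same route as the paper: both take the homeomorphism of primitive spectra supplied by Proposition \ref{Prop:2.2} and Corollary \ref{Crl:2.1} and upgrade it to an isomorphism of $C^*$-algebraic spaces by composing bounded continuous functions with the homeomorphism, then checking linearity, multiplicativity, compatibility with the pointwise involution, bijectivity, and commutation with restrictions. (If anything, your formula $\phi^\#_U(f)=f\circ\phi$ states the direction of the pullback more cleanly than the paper's $\widetilde{s}(t)=s(\pi^{-1}(t))$.)
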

We notice that, for computational purposes, the topological invariant of the affine primigraph space in this Lemma is simpler than $\Pg$ (which is usually large). We refer, again, to [16] and [41] for some results on the evaluation of primitive spectra of graph $C^*$-algebras.
\begin{proof}
By the previous argument, we only have to prove that
$$\Prim C^*(X/ \sim) \simeq \Prim C^*(Y/ \sim) \Rightarrow \Prim C^*(X/ \sim) \cong \Prim C^*(Y/ \sim)$$
Notice that, by what we proved above, the result holds for \textbf{any} homeomorphism that gives rise to some covering of $Y$. The homeomorphism (say, $\pi$) is the map between the underlying topological spaces of the $C^*$-algebraic spaces involved. Now, thanks to homeomorphism, we know that $\pi^{-1}(U)$ is open in $\Prim C^*(X/ \sim)$ whenever $U$ is open in $\Prim C^*(Y/ \sim)$. Moreover, we also know that $\pi$ is a bijection. We find a linear map between $\mathfrak{C}(U)$ and $\mathfrak{C}(\pi^{-1}(U))$ as follows. Since these $C^*$-algebras constist of bounded functions on $U$, $\pi^{-1}(U)$, respectively, to each map $s \in \mathfrak{C}(U)$ we can assign the map $\widetilde{s} \in \mathfrak{C}(\pi^{-1}(U))$ defined as follows:
$$ \widetilde{s}(t):= s(\pi^{-1} (t))$$
This is possible thanks to the bijectivity of $\pi$. It is clear that there is a bijection between $\mathfrak{C}(U)$ and $\mathfrak{C}(\pi^{-1}(U))$ obtained with the mapping above (which we denote by $T$). Indeed,
$$ s \xmapsto{T} \widetilde{s} $$
is certainly surjective. To prove injectivity, suppose that $\widetilde{s}_1 = \widetilde{s}_2$; we have to show that $s_1 = s_2$. Now, we have by definition:
$$ s_1 \circ \pi^{-1} = s_2 \circ \pi^{-1}$$
$\pi$ is bijective and thus invertible. Therefore, we can do the following:
$$ s_1 \circ \pi^{-1} \circ \pi = s_2 \circ \pi^{-1} \circ \pi $$
and consequently we have $s_1=s_2$, concluding the proof of the bijectivity of $T$. Moreover, this mapping is clearly linear. It is also bounded by definition of $\mathfrak{C}$, which consists of bounded functions. To prove that it is a $C^*$-isomorphism, we only have to check that:
$$T(ab)=T(a)T(b)$$
$$T(a^*)=T(a)^*$$
The first one is easily verified; for the second one, recall that by definition of $\mathfrak{C}$ the involution considered is conjugation, and so even this equation follows. To conclude the proof, we need to verify that the usual square commutes. But this can be easily shown, because the operation of restriction and the application of $T$ can be interchanged.
\end{proof}
We now conclude the proof of Theorem \ref{Thm:4.1}. We define:
\begin{definition}\label{Def:4.1}
Let $\Pg(X),\Pg(Y) \in \Pg(\Ob(\textbf{Top}))$, meaning that these primigraph spaces are associated to some topological space ($X$, $Y$, respectively) as in the proof of Theorem \ref{Thm:3.1}. $\Pg(X)$ and $\Pg(Y)$ are said to be weakly locally isomorphic, in symbols:
$$\Pg(X) \cong_{WL} \Pg(Y)$$
if the following two conditions holds:\\
1) for every admissible cover of $X$ (denoted by '$t$') there exists at least one admissible cover of $Y$ (denoted by '$p$') such that:
$$ \Prim(X/ \sim_t) \cong \Prim(Y/ \sim_p) $$
(as $C^*$-algebraic spaces);\\
2) conversely, for every admissible cover of $Y$ (denoted by '$t$') there exists at least one admissible cover of $X$ (denoted by '$p$') such that:
$$\Prim(Y/ \sim_t) \cong \Prim(X/ \sim_p) $$
(as $C^*$-algebraic spaces).
\end{definition}
Intuitively, this means that the two primigraph spaces obtained from some topological spaces are covered by isomorphic affine primigraph spaces (where these affine primigraph spaces are the ones constructed in the proof of Theorem \ref{Thm:3.1}, namely $\Prim( \cdot / \sim_p)$ for some admissible cover).
\begin{remark}\label{Rm:4.1}
\normalfont Notice that '$\cong_{WL}$' is an equivalence relation. Indeed, $\Pg(X) \cong_{WL} \Pg(X)$ because for every admissible cover of $X$, we can choose the same cover and obtain:
$$ \Prim(X/ \sim_t) \cong \Prim(X/ \sim_t) $$
Moreover, if $\Pg(X) \cong_{WL} \Pg(Y)$, then $\Pg(Y) \cong_{WL} \Pg(X)$ because both 1) and 2) above hold. To conclude, it is easy to see that $\cong_{WL}$ is also transitive, thanks to the fact that the local isomorphisms '$\cong$' are equivalence relations.
\end{remark}
Clearly, if $X \simeq Y$, then every $\Prim (X/ \sim_t) \subseteq \Pg(X)$  is isomorphic, as a $C^*$-algebraic space, to some $\Prim (Y/ \sim_p) \subseteq \Pg(Y)$, and also the converse holds (we proved this above). Thus:
$$ \Pg(X) \cong_{WL} \Pg(Y)$$
and the Theorem follows.
\begin{remark}\label{Rm:4.2}
\normalfont $\Pg$ is a new kind of topological invariant: if we can find some subspace $\Prim A$ in $\Pg(X)$ which is not isomorphic to any $\Prim B$ in $\Pg(Y)$, we can immediately conclude that $X$ and $Y$ are not homeomorphic. Furthermore, we can also use Lemma \ref{Lm:4.1}, which shows us that $\Prim$ gives rise (under all the mappings considered) to a topological invariant. This is easier to compute, as already noticed in this paper. We think that one of the most interesting aspect of primigraph spaces is that every topological space has a natural space of this kind associated to it, and this also constitutes a topological invariant. In fact, our main motivation for introducing these spaces was precisely this one. We also think that the results in [16] and [41] could be fruitfully applied to the previous theorems, in order to be able to verify, in practice, when two spaces are not homeomorphic. It could be difficult to evaluate explicitely $\Pg$, but various results together with the ones in this Section could at least lead to something easier to check.
\end{remark}
Now we give a really simple example of application using Example \ref{Ex:3.1} and Example \ref{Ex:3.2}. Of course, due to the fact that these examples involve the trivial topology and a slight generalisation of it, we obtain an obvious and well known conclusion. However, this example can be useful to see how to apply the previous results in more general and interesting cases.
\begin{example}\label{Ex:4.1}
\normalfont A topological space endowed with the trivial topology is not homeomorphic to any topological space with a topology as the one in Example \ref{Ex:3.2} with $n \neq 1$. Indeed, indicating the two spaces by $X$ and $Y$, respectively, we have:
$$ \Pg(X) \not \cong_{WL} \Pg(Y)$$
by the computations in the previous examples. Applying Theorem \ref{Thm:4.1}, we conclude the proof of the above statement. We could also use a "local version" of the theorem to arrive at this conclusion as follows. $\Prim \mathbb{C}$ is isomorphic to the affine primigraph subspace $\Prim \mathbb{C} \subset \Pg(Y)$, so we cannot conclude anything. However, looking in the "other direction", it is clear that $\Prim M_n(\mathbb{C})$ is not isomorphic to $\Prim \mathbb{C}$ for $n \neq 1$ (a similar argument holds for the countable case), and thus we can conclude that the spaces are not homeomorphic.
\end{example}
As the previous example shows, Theorem \ref{Thm:4.1} can be used in a "local form", which can be essentially obtained via "decomposition". We do not need to compute both $\Pg$'s in general: if we are able to find some information about one of them (not necessarily computing it explicitely), and we can find an affine primigraph subspace of the other one which is not isomorphic to any affine primigraph subspace $\Prim ( \cdot / \sim_p)$ of the former, then we can immediately conclude that the two spaces considered are not homeomorphic. Actually, a similar idea gives rise to another useful invariant . We define $\Pg_n(X)$ as before, but this time using only admissible covers consisting of precisely $n$ elements (if $n=\infty$, we consider it countable). By Lemma \ref{Lm:4.1} \footnote{Note that the fact that homeomorphisms give rise to admissible coverings with the same number of elements of the former cover is fundamental in establishing invariance.}, we clearly have:
\begin{equation}\label{Eq:4.1}
X \simeq Y \Rightarrow \Pg_n(X) \cong_{WL} \Pg_n(Y), \quad \forall n \in (\mathbb{N} \cup \lbrace \infty \rbrace) \setminus \lbrace 0 \rbrace
\end{equation}
Thus, we can restrict our attention to covers with a chosen number of elements; this way, we have an invariant which is easier to compute, since it is smaller than $\Pg$. In fact, it is clear that:
\begin{equation}\label{Eq:4.2}
\Pg(X)=\bigcup_{n \in (\mathbb{N} \cup \lbrace \infty \rbrace) \setminus \lbrace 0 \rbrace} \Pg_n(X)
\end{equation}
Example \ref{Ex:4.1} can then be easily reobtained using $\Pg_n$ instead of $\Pg$. We notice that $\Pg_1$ can be always computed, because the unique admissible cover with only one element is $\lbrace X \rbrace$, for which we clearly have the graph
$$\bullet$$
whose graph $C^*$-algebra is $\mathbb{C}$, as already said. Thus, we can conclude that, whichever are the topological spaces $X, Y \in \Ob(\textbf{Top})$, we have:
\begin{equation}\label{Eq:4.3}
\Pg_1(X) \cong \Pg_1(Y) \cong \Prim \mathbb{C}
\end{equation}
(notice that here we do not only have a weak local isomorphism). To $X= \emptyset$, which had been excluded in Lemma \ref{Lm:2.1}, we then assign by definition the graph
$$\bullet$$
and hence we have
$$ \Pg(\emptyset)=\Prim \mathbb{C} $$
(because its unique cover is $\lbrace \emptyset \rbrace$, even though not admissible). This way, we have a primigraph space associated to every topological space $X$, concluding the unique remaining case in the proof of Theorem \ref{Thm:3.1}. We now give some examples (which also involve $\Pg_n$):
\begin{example}\label{Ex:4.2}
\normalfont We show that $I=[0, 2 \pi)$ (with the subspace topology induced by $\mathbb{R}$) is not homeomorphic to the unit circle 
$$S^1= \lbrace (x,y) \in \mathbb{R}^2 : x^2 + y^2 =1 \rbrace$$
(with the subspace topology induced by $\mathbb{R}^2$). Here, the Euclidean spaces are endowed with the usual topologies. We will prove this by showing that there is no open cover $(X_p)_p$ of $I$ consisting of four sets which is isomorphic to a particular cover (with four sets) of the unit circle. The reason will be due to the fact that the circle induces a circular behavior in the graph. Any open cover by four sets of $I$ consists of elements of one of the forms:
$$(a,b)$$
$$[0,c)$$
(the second interval is not open \textit{in} $\mathbb{R}$, but it is open \textit{in} $I$ because it is given by the intersection of $I$ with some open set in $\mathbb{R}$ (and thus it is open by definition of subspace topology)). We now show which are the graphs obtained in this situation. Let
$$ 0 < b < a < d < c < f < e < 2 \pi $$
and consider the admissible cover
$$ \lbrace [0,a), (b,c), (d,e), (f, 2 \pi) \rbrace $$
This gives rise to the following graph:
\begin{center}
\begin{tikzcd}
\bullet \arrow{dr} \\
& \bullet \\
\bullet \arrow{ru} \arrow{dr} \\
& \bullet \\
\bullet \arrow{ru} \arrow{dr} \\
& \bullet \\
\bullet \arrow{ru}
\end{tikzcd}
\end{center}
The following modified inequality
$$ 0 < b < d < a < f < c < e < 2 \pi $$
with the same cover as above leads to the graph:
\begin{center}
\begin{tikzcd}
\bullet \arrow{r} & \bullet \arrow{r} & \bullet\\
& \bullet \arrow{ru} \arrow{r} & \bullet \\
\bullet \arrow{r} & \bullet
\end{tikzcd}
\end{center}
If instead we considered a covering containing $I$, say, for instance:
$$ \lbrace [0,2 \pi), (a,b), (c,d), (e,f) \rbrace $$
with
$$ 0 < a < c < b < e < d < f < 2 \pi $$
then we would obtain:
\begin{center}
\begin{tikzcd}
& \bullet \arrow{r} & \bullet\\
\bullet \arrow{ru} \arrow{r} \arrow{dr} & \bullet \arrow{ru} \arrow{dr}\\
& \bullet \arrow{r} & \bullet
\end{tikzcd}
\end{center}
Now consider a particular cover of the circle, namely:
$$ X_p:=\lbrace (x,y) \in S^1 : \Arg (x+iy) \in (p \frac{\pi}{2} - \frac{1}{100}, (p+1) \frac{\pi}{2} + \frac{1}{100}) \rbrace $$
where $\Arg$ denotes the principal argument \footnote{Actually, we have abused notation using some values $ \geq 2 \pi$ for the principal argument, which is not allowed. However, the cover of the circle we are dealing with should be clear.} of a complex number, and $p=0, 1,2,3$. The graph obtained from this cover is:
\begin{center}
\begin{tikzcd}
& \bullet \\
\bullet & \bullet \arrow{l} \arrow{u} & \bullet \arrow{ul} \arrow{r} & \bullet \\
\bullet \arrow{u} \arrow{dr} & & \bullet \arrow{dl} \arrow{ru} \\
& \bullet
\end{tikzcd}
\end{center}
Clearly, no cover with four elements of $I$ can have the "circular" behavior in the previous graph. The first cover of $I$ shown above is almost the same, because there is only a "missing" arrow: however, suppose that such an arrow could be found modifying the cover of $I$. This would imply that 
$$[0,a) \cap (f, 2 \pi) \neq \emptyset $$
but if this were true, we would loose the two "middle points" of the graphs (because 
$$ \lbrace [0,a), (f, 2 \pi) \rbrace$$
would be itself a cover of $I$), obtaining, for instance, something like:
\begin{center}
\begin{tikzcd}
\bullet \arrow{dr} \\
& \bullet \arrow{r} & \bullet \\
\bullet \arrow{ru} \arrow{dr} \arrow{r} & \bullet \arrow{r} & \bullet\\
& \bullet \arrow{ru}
\end{tikzcd}
\end{center}
(here we have used the inequality:
$$ 0 < f < b < a < d < c < e < 2 \pi $$
which is a slight modification of the above one, where we have $f < a$ so that the intersection is not empty) and thus we would not arrive at the same graph. It is easy to see that no graph for a cover with $n=4$ elements of $I$ is isomorphic to the chosen graph of $S^1$, and thus there cannot be a homeomorphism, because if $\pi$ were a homeomorphism the cover $(\pi(X_p))_p$ of $I$ would have $4$ elements and would give rise to a graph isomorphic to the former one. Since this does not happen, we conclude that
$$ S^1 \not \simeq [0,2 \pi)=I$$
Notice that we have not even used primigraph spaces here: we have only used the first step in the proof of the invariance of these spaces (Theorem \ref{Thm:4.1}). By using some arguments involving the properties of graph $C^*$-algebras induced by their corresponding graphs (see, for instance, Section 2 in [42]), we can also arrive at the same conclusion using $\Pg_4$.
\end{example}
\begin{example}\label{Ex:4.3}
\normalfont We consider $\mathbb{R}$ and $\mathbb{R}^2$ with the usual topologies. Again, we let $n=4$, so that the covers of $\mathbb{R}$ can be obtained similarly to the ones in the previous example. Since the graphs obtained are similar, we do not represent them here. We consider a particular cover of $\mathbb{R}^2$ and we show, as before, that this cannot give rise to a graph isomorphic to any of the above ones. Again, arguments on the properties of graph $C^*$-algebras (see [42] for some examples) allow us to conclude also using $\Pg_4$. The cover we consider is the following one:
$$ X_1 := \lbrace (x,y) \in \mathbb{R}^2 : x < 8, y > -6 \rbrace $$
$$ X_2 := \lbrace (x,y) \in \mathbb{R}^2 : x > -3, y < 4 \rbrace $$
$$ X_3:= \lbrace (x,y) \in \mathbb{R}^2 : x,y > 0 \rbrace $$
$$ X_4 := \lbrace (x,y) \in \mathbb{R}^2 : x < 4, y < 2 \rbrace $$
Then, after some calculations, we obtain the following graph:
\begin{center}
\begin{tikzcd}
\bullet \arrow{r} & \bullet \arrow{dr} \\
\bullet \arrow{ru} \arrow{r} \arrow{dr} & \bullet \arrow{dr} & \bullet \arrow{dr} \\
\bullet \arrow{ru} \arrow{dr} & \bullet \arrow{ru} \arrow{r} & \bullet \arrow{r} & \bullet \\
\bullet \arrow{ru} \arrow{r} & \bullet \arrow{ru}
\end{tikzcd}
\end{center}
Clearly, no graph of $\mathbb{R}$ is isomorphic to the graph above, so we can immeditaly conclude that:
$$ \mathbb{R} \not \simeq \mathbb{R}^2 $$
Arguments as the ones in [42] lead to the same conclusion, using $\Pg_4$.
\end{example}
\begin{example}\label{Ex:4.4}
\normalfont We now consider $X=(\mathbb{R}, \tau_e)$, that is, $\mathbb{R}$ with the usual topology, and $Y=(\mathbb{R}, \tau)$, where $\tau$ is the topology generated by $(-\infty,0)$, $(0,\infty)$, $\lbrace 0 \rbrace$, that is:
$$ \tau:= \lbrace \mathbb{R}, (-\infty,0), (0,+\infty), \lbrace 0 \rbrace, [0, +\infty), (-\infty,0], \mathbb{R} \setminus \lbrace 0 \rbrace \rbrace $$
We can immediately conclude that these spaces are not homeomorphic because $\Pg_n(Y)$, for $n \geq 8$, is the empty set (because there is no cover with more than $7$ elements for $Y$) while $\Pg_n(X)$ is clearly nonempty. We also prove this in some other ways. Consider $\Pg_7$ for both spaces. For $Y$, we have only one possible cover with $7$ elements, namely $\tau$ itself. It is not difficult to see that:
$$h_Y(x)= \begin{cases} \lbrace \mathbb{R}, \mathbb{R} \setminus \lbrace 0 \rbrace, (0,+\infty), [0, +\infty) \rbrace, & \text{if} \, x > 0 \  \\ \lbrace \mathbb{R}, \lbrace 0 \rbrace, (-\infty,0], [0, +\infty) \rbrace, & \text{if} \, x=0 \ \\ \lbrace \mathbb{R}, \mathbb{R} \setminus \lbrace 0 \rbrace, (-\infty,0], (-\infty,0) \rbrace, & \text{if} \, x<0 \ \  \end{cases} $$
Thus, the graph obtained is:
$$ \bullet $$
$$ \bullet $$
$$ \bullet $$
Now consider the following cover with $7$ elements of $X$:
$$ \lbrace \mathbb{R}, (a,b), (c,d), (a_1,b_1), (c_1,d_1), (a_2,b_2), (c_2,d_2) \rbrace $$
where:
$$ - \infty < a < a_1 < a_2 < b_1 < c < c_2 < b_2 < b < c_1 < d_2 < d_1 < d < +\infty $$
After some calculations, we find the following graph:
\begin{center}
\begin{tikzcd}
& \bullet \arrow{r} & \bullet \arrow{r} & \bullet \\
\bullet \arrow{ru} \arrow{dr} & & \bullet \arrow{ru} \arrow{r} & \bullet \arrow{r} & \bullet \\
& \bullet \arrow{r} & \bullet \arrow{dr} & \bullet \arrow{d} \arrow{r} & \bullet \arrow{u} \\
& & & \bullet
\end{tikzcd}
\end{center}
Thus, we can conclude that
$$ X \not \simeq Y$$
in at least three ways now (in addition to the previous one with $\Pg_n$, $n \geq 8$):\\
1) since there is only one cover with $7$ elements for $Y$, while $X$ has many other covers with $7$ elements, if there were a homeomorphism we would have that all the graphs obtained from the covers of $X$ would be all isomorphic, but this is clearly false, so the spaces cannot be homeomorphic;\\
2) we have found a graph (obtained from $X$, with $n=7$) which is not isomorphic to any (actually, the unique, up to isomorphism) graph obtained from $Y$ (with $n=7$). Thus, the spaces cannot be homeomorphic, as shown in the previous examples;\\
3) it is not difficult to see that the graph $C^*$-algebras of the graph obtained from $Y$ and the one for $X$ above are not isomorphic, and that neither their primitive spectra are isomorphic as $C^*$-algebraic spaces. Hence:
$$ \Pg_n(X) \not \cong_{WL} \Pg_n(Y) $$
and thus the spaces cannot be homeomorphic.
\end{example}
\subsection{Other invariants arising from some of the previous proofs}
Some of the proofs in the previous Section show that there are also other possible invariants for topological spaces that arise from the mapping:
$$ \text{topological space} \, \mapsto \, \text{graph(s)} \, \mapsto \, C^* \text{-algebra(s)} $$
These invariants involve the theory of structured spaces (see [1]). The first one is defined as follows. Consider the union of all the graph $C^*$-algebras over the admissible covers of $X$:
\begin{equation}\label{Eq:4.4}
\Str_{C^*}(X):= \bigcup_p C^*(X/ \sim_p)
\end{equation}
It is clear that this is a structured space, where all the fixed neighborhoods are $C^*$-algebras and the topology is, as usual, the one in Example 1.1 of [1], that is, the smallest one generated by this collection of sets. Now suppose that $X \simeq Y$. By the proof in the previous section, we know that this implies $C^*(X/ \sim_t) \cong C^*(Y/ \sim_t)$ for every admissible cover. Thus, by Section 3 in [1], we have:
$$ \Str_{C^*}(X) \cong \Str_{C^*}(Y) $$
(as structured spaces). Consequently, $\Str_{C^*}$ is a topological invariant:
\begin{proposition}\label{Prop:4.1}
$$ X \simeq Y \Rightarrow \Str_{C^*}(X) \cong \Str_{C^*}(Y) $$
where the second is an isomorphism of structured spaces.
\end{proposition}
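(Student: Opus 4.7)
The plan is to reduce Proposition \ref{Prop:4.1} to the cover-by-cover graph $C^*$-algebra isomorphisms already established in the proof of Theorem \ref{Thm:4.1}, and then to invoke the definition of isomorphism of structured spaces recalled in Section 3 of [1]. First I would fix a homeomorphism $\pi : X \rightarrow Y$ and observe, exactly as in the proof of Theorem \ref{Thm:4.1}, that the assignment $(X_p)_p \mapsto (\pi(X_p))_p$ is a bijection between the admissible covers of $X$ and those of $Y$; admissibility is preserved because $\pi$ descends to a bijection between the posets $X/\sim$ and $Y/\sim$, so at most countability is transferred.

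Next I would appeal, for each admissible cover, to the argument already exhibited in the proof of Theorem \ref{Thm:4.1}: the two graphs $X/\sim_t$ and $Y/\sim_t$ (where $Y$ is equipped with the pushforward cover $(\pi(X_p))_p$) are isomorphic as directed graphs, hence by Proposition \ref{Prop:2.2} their graph $C^*$-algebras are $^*$-isomorphic. This produces, indexed by the admissible covers $t$ of $X$, a family of $C^*$-isomorphisms $\varphi_t : C^*(X/\sim_t) \xrightarrow{\sim} C^*(Y/\sim_t)$, whose sources and targets are precisely the fixed neighborhoods generating $\Str_{C^*}(X)$ and $\Str_{C^*}(Y)$ in the sense of Example 1.1 of [1].

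Finally, I would assemble the $\varphi_t$'s into a single morphism of structured spaces. Because the topology on $\Str_{C^*}(\cdot)$ is by definition the smallest one generated by the family of its fixed neighborhoods, the set-theoretic bijection sending $C^*(X/\sim_t)$ to $C^*(Y/\sim_t)$ is automatically a homeomorphism with respect to the generated topologies, and combined with the pointwise $\varphi_t$'s it verifies the conditions for an isomorphism of structured spaces recalled in Section 3 of [1]. This would yield $\Str_{C^*}(X) \cong \Str_{C^*}(Y)$ as required.

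The main obstacle I expect is a coherence issue on overlaps: if an element of $\Str_{C^*}(X)$ happens to lie in two distinct fixed neighborhoods $C^*(X/\sim_t)$ and $C^*(X/\sim_{t'})$, the two local isomorphisms $\varphi_t$ and $\varphi_{t'}$ ought to agree there. If the framework of [1] requires this strict compatibility, one should make the $\varphi_t$'s canonical by transporting structure along the single fixed homeomorphism $\pi$ rather than choosing them independently cover by cover; if, on the other hand, structured spaces in [1] are treated as essentially disjoint unions of their fixed neighborhoods, the naive assignment already works and the argument reduces to a routine verification.
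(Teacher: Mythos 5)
Your proposal follows essentially the same route as the paper: the paper's argument is precisely to invoke the cover-by-cover isomorphisms $C^*(X/\sim_t) \cong C^*(Y/\sim_t)$ established in the proof of Theorem \ref{Thm:4.1} and then cite Section 3 of [1] to assemble them into an isomorphism of structured spaces. Your additional remarks on the bijection of admissible covers and the potential coherence issue on overlaps are more explicit than the paper, which leaves those points to the cited reference, but the underlying argument is the same.
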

Now we recall a well known invariant of $C^*$-algebras (or, more generally, of operator algebras). We refer to [34-37] for more details. Operator K-theory is an important invariant for $C^*$-algebras; it is known that, by Bott Periodicity Theorem, there are "only" two K-groups up to isomorphism, namely $K_0$ and $K_1$. A useful way to compute these groups is via a $6$-terms exact cyclic sequence, see for instance Section 6.3.1 in [37] or Theorem 12.1.2 in [34]. Now, to each graph $C^*$-algebra $A$ obtained from $X$ we assign the couple:
$$ (K_0(A),K_1(A)) $$
which is clearly an invariant. Since $K_0$ and $K_1$ are groups, we can define the structured space (see Section 3 in [1] for products of structured spaces):
\begin{equation}\label{Eq:4.5}
\Str_{K}(X):= \bigcup_p (K_0(X/ \sim_p),K_1(X/ \sim_p))
\end{equation}
(the union is again over all the admissible covers of $X$). Then, as for the previous invariant, it is easy to see that the following result holds:
\begin{proposition}\label{Prop:4.2}
$$ X \simeq Y \Rightarrow \Str_{K}(X) \cong \Str_{K}(Y) $$
where the second is an isomorphism of structured spaces.
\end{proposition}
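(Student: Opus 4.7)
The plan is to mirror, step by step, the argument that established Proposition \ref{Prop:4.1} for $\Str_{C^*}$, replacing the $C^*$-isomorphism invariance by the invariance of the pair $(K_0, K_1)$ under $C^*$-isomorphism. First I would assume $X \simeq Y$ via some homeomorphism $\pi$, and invoke the correspondence between admissible covers of $X$ and admissible covers of $Y$ given by $(X_p)_p \mapsto (\pi(X_p))_p$, which was already shown in the proof of Theorem \ref{Thm:4.1} and Lemma \ref{Lm:4.1} to yield isomorphic graphs $X/\sim_p \, \cong \, Y/\sim_p$, and hence isomorphic graph $C^*$-algebras $C^*(X/\sim_p) \cong C^*(Y/\sim_p)$ by Proposition \ref{Prop:2.2}.

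Next I would use functoriality of operator K-theory: a $C^*$-isomorphism $A \cong B$ induces group isomorphisms $K_0(A) \cong K_0(B)$ and $K_1(A) \cong K_1(B)$ (this is a standard property of the K-functors, cf.\ the references [34--37] cited just above the statement). Consequently, for each admissible cover indexed by $p$, one obtains a pair-of-groups isomorphism
$$ (K_0(X/\sim_p),K_1(X/\sim_p)) \cong (K_0(Y/\sim_p),K_1(Y/\sim_p)) $$
where the right-hand pair is formed with the product of structured spaces as in Section 3 of [1].

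Then I would assemble these fibrewise isomorphisms into an isomorphism of the total structured spaces $\Str_K(X)$ and $\Str_K(Y)$. Since the topology on each $\Str_K$ is, by definition, the smallest one generated by the collection of these pair-of-groups neighborhoods (Example 1.1 in [1]), and since the admissible covers of $X$ and $Y$ are in bijection via $\pi$ in a way that matches the fixed neighborhoods one by one, the required isomorphism of structured spaces is given exactly as in Section 3 of [1]; this is the same mechanism used to conclude Proposition \ref{Prop:4.1}, only with $C^*(X/\sim_p)$ replaced by the pair $(K_0(X/\sim_p),K_1(X/\sim_p))$.

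The only point that requires a little care, and which I would flag as the main (mild) obstacle, is checking that the product-of-structured-spaces construction behaves well under the cover bijection: one must verify that the isomorphism of pairs assembles into a morphism of structured spaces compatible with the topology generated by the fixed neighborhoods, rather than merely a family of group isomorphisms indexed by $p$. This is handled exactly by the functoriality of the constructions in Section 3 of [1], so no new argument is needed beyond what was used for $\Str_{C^*}$.
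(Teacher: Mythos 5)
Your proposal is correct and follows essentially the same route as the paper, which simply notes that the cover correspondence from the proof of Theorem \ref{Thm:4.1} yields $C^*(X/\sim_p) \cong C^*(Y/\sim_p)$, that $(K_0,K_1)$ is invariant under $C^*$-isomorphism, and that Section 3 of [1] assembles these into an isomorphism of structured spaces ``as for the previous invariant.'' Your version is in fact more explicit than the paper's, which leaves the assembly step (the point you rightly flag) entirely to the citation of [1].
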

Thus, we have obtained new interesting invariants for topological spaces. Even in these cases, it is often useful (from a computational point of view) to consider subspaces: for instance, if we are able to find some graph $C^*$-algebra given by $X$ which is not isomorphic to any $C^*$-algebra given by $Y$, then $X$ and $Y$ cannot be homeomorphic. This way, we avoid the evaluation of the (often large) space $\Str_{C^*}$. The restriction to some $n$ as for $\Pg_n$ is still possible, as it can be easily seen. These can be used, for instance, to give yet another proof of the results in the previous examples.
\section{Conclusion}
In this paper we have introduced a new kind of space (similar, in some sense, to schemes and perfectoid spaces) and we have shown that every topological space gives rise in a natural way to one of these spaces. A remarkable property of these spaces is that they are also a topological invariant: if $\Pg(X)$ is not weakly locally isomorphic to $\Pg(Y)$, then $X$ and $Y$ cannot be homeomorphic (simpler invariants are obtained via a "local form" of Theorem \ref{Thm:4.1} or by using $\Pg_n$). Following another direction, we note that primigraph spaces are also related to Leavitt path algebras (see, for instance, [40]): we think that these new spaces could be also useful to study the connections between such algebras and graph $C^*$-algebras. We also think that, even though $\Pg$ is often difficult to compute, using results such as the ones in [38-39], [16] and [41], it would be possible to find some expressions for them, or at least we may be able to find something that, together with the results in Section 4, could give easier methods to apply the new invariants introduced here.

\end{document}